\documentclass[12pt,letterpaper]{article}
\usepackage[margin=1.0in]{geometry}
\usepackage[utf8]{inputenc}
\usepackage{amsmath,amsthm,amsfonts}
\usepackage{graphicx}
\usepackage{float}
\usepackage{hyperref}
\usepackage{euler}
\usepackage{comment}
\usepackage{pgfplots}
\usepackage{mathrsfs}
\usetikzlibrary{arrows}
\usepackage{subcaption}
\usepackage[
backend=biber,
style=alphabetic
]{biblatex}
\usepackage{subcaption}
\usepackage[labelformat=parens,labelsep=quad,skip=3pt]{caption}
\addbibresource{references.bib}

\newtheorem{theorem}{Theorem}
\newtheorem{Lemma}{Lemma}
\newtheorem{remark}{Remark}
\newtheorem{note}{Note}

\newtheorem{proposition}{Proposition}

\newcommand{\overbar}[1]{\mkern 1.5mu\overline{\mkern-1.5mu#1\mkern-1.5mu}\mkern 1.5mu}
  
 \newcommand\newsubcap[1]{\phantomcaption%
       \caption*{\figurename~\thefigure(\thesubfigure): #1}}
 
\title{Countable modular groups of infinite type surfaces}
\author{Rogelio Ni\~no Hern\'andez}
\date{\today}

\newcommand{\Address}{
  \bigskip
  \footnotesize

   \textsc{Centro de Ciencias Matemáticas-UNAM,
     Antigua Carretera a Pátzcuaro \# 8701, Sin Nombre, Residencial San José de la Huerta, 58089 Morelia, Mich.}\par\nopagebreak
  \textit{E-mail address}:\texttt{rnino@matmor.unam.mx}
}

\begin{document}

\maketitle

\begin{abstract}
 We prove that every connected, orientable infinite type surface $S$ without boundary and finite genus has a Riemann surface structure such that its modular group of quasiconformal homeomorphisms is countable.     
\end{abstract}
\section{Introduction}

Consider a surface without boundary, connected and orientable denoted as $S$, and its topological mapping class group, defined as the quotient of the group of orientation-preserving homeomorphisms of $S$ by the subgroup of homeomorphisms isotopic to the identity ($Map(S) = Homeo^+(S)/Homeo_0(S)$). Consider also a Riemann surface structure $R$ on $S$ and the subgroup $Mod(R)$ of $Map(S)$ for which there exists a quasiconformal representative with respect to $R$. 

In the case where the surface $S$ is of finite topological type (meaning its fundamental group is finitely generated), both $Map(S)$ and $Mod(R)$ are always isomorphic and countable. However, when the surface $S$ is of infinite topological type (with a non-finitely generated fundamental group), the situation changes. In this context, the mapping class group $Map(S)$ is always uncountable. Furthermore, certain homeomorphisms cannot be realised as elements within $Mod(R)$, regardless of the chosen Riemann surface structure $R$. This follows from considering infinite products of Dehn twists by increasing powers (see \cite{Dehntwistsproducts}). 

In his work \cite{Matsuzaki}, Matsuzaki shows that for the surface $S = \mathbb{S}^2 \setminus C$, where $C$ represents the Cantor set, there exists a Riemann surface structure $R$, wherein the associated group $Mod(R)$ is countable.  Chandra, Patel and Vlamis \cite{Vlamis} ask "How does $Mod(R)$ sits in $Map(S)$?", we extend Matsuzaki's results to give a partial answer to this question. Our main result is the following. 

\begin{theorem}
\label{MainResult}
For every connected, orientable infinite type surface $S$ of finite genus there exists a Riemann surface structure $R$ such that $Mod(R)$ is countable.

\end{theorem}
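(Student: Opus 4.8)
The plan is to build $R$ so that it is \emph{conformally rigid near its ends}, and then to exploit the fact that a quasiconformal map distorts the extremal length of every simple closed curve by at most a bounded factor. Writing $\mathcal{Q}_K \subseteq Map(S)$ for the set of mapping classes admitting a $K$-quasiconformal representative on $R$, one has $Mod(R) = \bigcup_{K \in \mathbb{N}} \mathcal{Q}_K$, so it suffices to show that each $\mathcal{Q}_K$ is countable. Since the isotopy classes of essential simple closed curves on $S$ form a countable set $\mathcal{C}$ on which every mapping class acts, I would reduce the problem to showing that for each fixed $K$ a $K$-quasiconformal self-map fixes all but finitely many elements of $\mathcal{C}$: such a map is isotopic to one supported on a finite-type subsurface, whose mapping class group is countable.

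\textbf{The construction.} By the classification of surfaces, a finite-genus infinite-type $S$ is determined by its genus $g$ and its (infinite, compact, totally disconnected) space of ends $E$. First I would fix a compact core of genus $g$ and decompose the complement into a locally finite union of pairs of pants arranged along the tree encoding $E$; the separating curves $\{c_i\}_{i \in \mathbb{N}}$ of this decomposition exhaust $S$ and converge to the ends. I would then prescribe a hyperbolic (hence Riemann) structure through Fenchel--Nielsen data, choosing the cuff lengths $\ell(c_i)$ to \emph{grow super-geometrically as the curves move toward the ends}, with all values distinct and with the two subtrees hanging off each branch assigned different data, so that no two branches are isometric. Long cuffs have collars of small modulus, so (i) a single Dehn twist about a deep curve $c_i$ already requires dilatation tending to infinity with $i$, and (ii) the extremal lengths $\lambda_R(c_i)$ inherit a proper, super-increasing profile. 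The intended effect is that $\lambda_R \colon \mathcal{C} \to \mathbb{R}_{>0}$ is \emph{proper} (finitely many classes below any bound) and, beyond a finite core, has gaps tending to infinity.

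\textbf{The rigidity argument.} Fix $K$ and let $f$ be $K$-quasiconformal. Quasi-invariance of extremal length gives $\lambda_R(f(c)) \in [K^{-1}\lambda_R(c), K\,\lambda_R(c)]$ for every $c \in \mathcal{C}$. Properness forces $f$ to map the finite set $\{\lambda_R \le \Lambda\}$ into the finite set $\{\lambda_R \le K\Lambda\}$, and the super-increasing gaps make each sufficiently deep class the unique curve in its $K$-band, so that $f(c)=c$ for all but finitely many $c$. Moreover $f$ cannot twist about these deep curves, since their collars have modulus too small to absorb a twist within dilatation $K$. Hence $f$ is isotopic to a homeomorphism supported on the finite-type subsurface $S_{N(K)}$ spanned by the finitely many non-fixed classes, so $\mathcal{Q}_K$ embeds in $Map(S_{N(K)})$ and is countable; taking the union over $K$ shows $Mod(R)$ is countable.

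\textbf{Main obstacle.} The genuine difficulty is the conformal geometry hidden in the word ``intended'': the hard part will be to prove that the chosen gluing data really make $\lambda_R$ proper with super-increasing gaps \emph{over all of $\mathcal{C}$}, not merely over the chosen cuffs. This demands extremal-length estimates controlling non-separating and recombined curves, together with the branching of a general end space, and it is precisely here that Matsuzaki's estimates for the Cantor-set case must be extended; one must also ensure that distinct branches are genuinely non-interchangeable (ruling out cheap permutations of ends) and that the infinite gluing yields a complete Riemann surface realizing the prescribed topological $S$. Incorporating the finite genus should be comparatively harmless, as it is confined to the compact core and contributes only a finitely generated piece.
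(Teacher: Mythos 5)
Your proposal reproduces the paper's overall architecture --- a pants decomposition organized along the tree of ends, cuff lengths growing super-fast toward the ends, quasi-invariance of a length function under $K$-quasiconformal maps, rigidity beyond a finite core, and countability via $Mod(R)=\bigcup_K \mathcal{Q}_K$ --- but the central rigidity mechanism you propose does not work, and the target statement of your reduction is false. You claim it suffices to show that every $K$-quasiconformal self-map fixes all but finitely many elements of $\mathcal{C}$; but a Dehn twist about a fixed short curve in the compact core is $K_0$-quasiconformal for some finite $K_0$ (independent of anything at the ends) and moves \emph{infinitely} many isotopy classes, namely every class crossing it. So $\mathcal{Q}_{K_0}$ contains maps without your property for \emph{any} choice of $R$. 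Worse, the spectral property you want to engineer --- $\lambda_R$ proper on all of $\mathcal{C}$ with each sufficiently deep class the unique curve in its multiplicative $K$-band --- is unachievable for any Fenchel--Nielsen data: any essential finite-type subsurface of complexity at least one (two adjacent pairs of pants already form a four-holed sphere, and your genus core is another) carries infinitely many simple closed geodesics whose counting function below length $L$ grows polynomially in $L$; already the twist family $T_a^k(b)$ has roughly quadratically spaced lengths and extremal lengths, so every band $[L/K, K L]$ contains arbitrarily many classes as $L\to\infty$. The obstruction lives inside a fixed compact part of the surface, untouched by your choices at the ends, so no amount of ``super-increasing gaps'' along the cuffs can make $f(c)=c$ hold for all but finitely many $c\in\mathcal{C}$.

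What rescues the strategy --- and what the paper actually does --- is to localize the rigidity: one proves only that the exhausting subsurfaces are preserved, not that individual curves are fixed spectrum-wide. Lemma \ref{nonwanderingsurface} shows there is an $N$ such that every $K$-quasiconformal $g$ with $K\geq N$ satisfies $g(R_n)$ freely homotopic to $R_n$ for all $n\geq K$; the mechanism is a \emph{local geodesic-segment estimate}, not a gap in the length spectrum: if a boundary cuff $c_n$ (of length $(2n+1)!$) had image escaping $R_n$, the geodesic representative of $g(c_n)$ would contain a segment crossing a deeper pair of pants, and the right-angled pentagon bound (Lemma \ref{boundpenthagon}, supplemented by Matsuzaki's computation for type-$0$ pants) forces that segment to have length greater than $(2m+1)!(2m+1)$ with $m\geq n$, contradicting Wolpert's inequality $l(g(c_n))\leq K\,l(c_n)$ (Lemma \ref{wolpertLemma}). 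Then, on $R_n$, the restriction of $g$ is \emph{not} shown to be trivial (it cannot be, by the Dehn-twist example above); instead one shows the restriction map $\Psi_n\colon Mod(R)_n\to Mod(R_n)$ is injective, using precisely your correct observation (i): two maps with the same restriction differ outside $R_n$ by at most Dehn twists along deep cuffs, and such twists force dilatation exceeding $n$ (Matsuzaki's Theorem 3 in \cite{Matsuzaki}); countability then follows from countability of each finite-type group $Mod(R_n)$. Two further discrepancies with the paper: it does not need your ``non-isometric branches'' condition (cuffs at equal depth all get the same length $(2n+1)!$), and the completeness you mention in passing is essential and not automatic --- the twists must be chosen via Basmajian--\v{S}ari\'c \cite{completehypmanifolds} to make $R$ complete and of the first kind, since otherwise boundary at infinity makes $Mod(R)$ uncountable regardless of the lengths.
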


The strategy we use follows closely that of Matsuzaki \cite{Matsuzaki}, which can be roughly summarised as follows:
using the notion of a core tree defined by Bavard and Walker \cite{BavWal} we give $S$ a convenient pants decomposition which will be used to construct a Riemann surface structure $R$ on $S$. This structure comes with an exhaustion sequence of Riemann sub-surfaces of finite type $\{R_n\}$.
The exhaustion $\{R_n\}$ has the crucial property: there exists an $N$ such that for $K\geq N$ and every $n \geq K $ all $K$-quasiconformal maps in Mod(R) leave $R_n$ invariant modulo free homotopy. Another way to say the latter is that at some point in the exhaustion the $K$-quasiconformal maps will leave invariant the sub-surfaces. 

The later property enables us to define a restriction map $\Psi_n:Mod(R)_n \to Mod(R_n)$ from the set of equivalence classes of $n$-quasiconformal homeomorphisms of $R$ to the countable group $Mod(R_n)$, which comprises equivalence classes of quasiconformal homeomorphisms of $R_n$ up to homotopy. We establish the countability of $Mod(R)$ by showing the injectivity of $\Psi_n$.

Chandran, Patel and Vlamis also mention a brief strategy that resembles ours to proof $Mod(R)$ is equal to the subgroup of $Map(S)$ generated by the compactly supported mapping classes. There is a slight difference from what we do, since we are constructing a pair of pants decomposition of $S$ by core trees. Since there is an element of choice, it is possible there exist conformal maps in $Mod(R)$ that induce homeomorphisms on the ends of $S$ which are not the identity. An example of the latter could be a rotation. Hence $Mod(R)$ may not be equal to the group of compactly supported mapping classes.

\begin{comment}

 \begin{figure}[!htb]
   \begin{subfigure}
   \includegraphics[width=\linewidth]{PairofPants.png}
  \newsubcap{Pair of pants}\label{pairofpants}
   \end{subfigure}
  \begin{subfigure}
  \includegraphics[width=\linewidth]{PerifericPair.png}
  \caption{Periferic Pair}\label{perifericpair}
 
  \end{subfigure}
\minipage{0.3\textwidth}%
  \includegraphics[width=\linewidth]{ExteriorCurve.png}
  \caption{Exterior Curve}\label{exteriorcurve}
\endminipage 
\end{figure}
\end{comment}

\section*{Acknowledgements}

We thank Ferrán Valdez for his invaluable feedback that improved this manuscript. To Tommaso Cremaschi and Yassin Chandran for useful discussions. Thanks also to Carlos Matheus and the CMLS, École Polytechnique for providing the facilities where this research was done. Finally many thanks to the Solomon Lefschetz Laboratory at UNAM, CONACYT Ph.d. grant, and UNAM PAPIIT IN-101422 for providing funding during the visit at L'X. We thank the anonymous referee for the helpful comments that improved the exposition of the paper. 

\section{Preliminaries}

As previously mentioned, for the construction of the Riemann surface $R$ we use a convenient pants decomposition of $S$. This is achieved using the notion of $\emph{core tree}$, denoted as $T=T(S)$, which we explain in what follows. 

\subsection{Core trees}

For a detailed construction we refer to Lemma 2.3.1 of \cite{BavWal}. The construction applies in fact to any connected, orientable infinite type surface. We recall here briefly the properties of a core tree:

\begin{itemize}
    
    \item It is an rooted infinite tree where each vertex can only have degrees $1,2$ or $3$.
    
    \item The vertices are either marked or unmarked. The unmarked only have degree $1$ or $3$. The root if unmarked is of degree $3$.
    
    \item The space $Ends(T)$ is homeomorphic to $Ends(S)$. Here $Ends(\cdot)$ is the space of ends of a topological space. See \cite{Richards} for an exposition on ends. 
    
\end{itemize}

\begin{figure}[!htb]
\hspace{0.05\textwidth}
      \begin{subfigure}{6cm}
   \includegraphics[width=6cm]{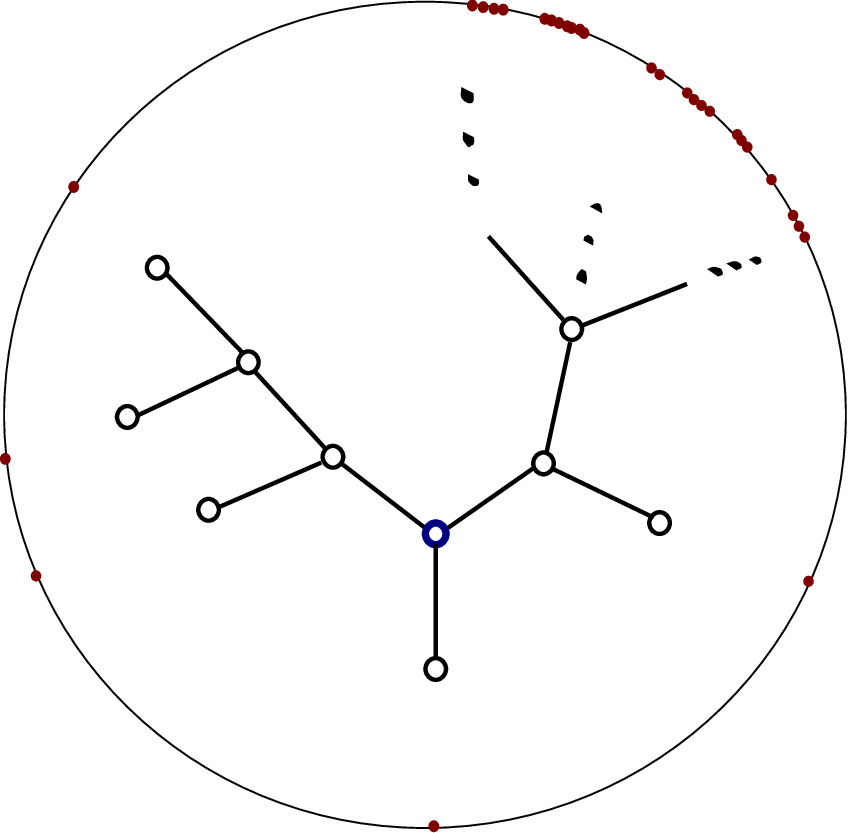}
  \newsubcap{Core tree. The root is blue colored and the ends are represented on the circle.}\label{tree}
  \end{subfigure}
  \hspace{0.05\textwidth}
\begin{subfigure}{6cm}
  \includegraphics[width=6cm]{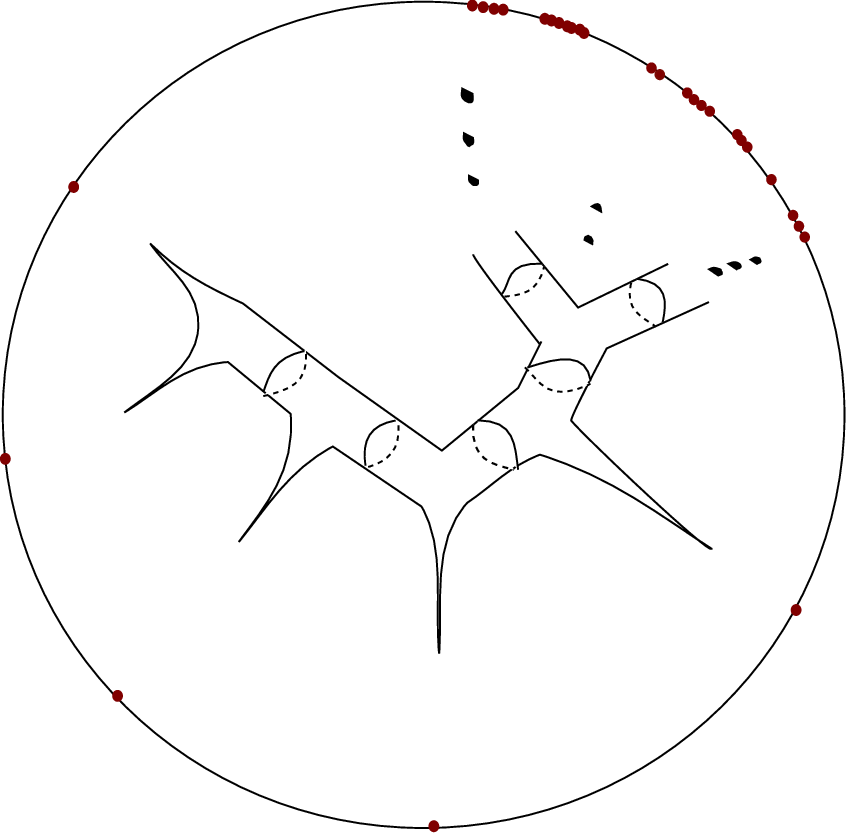}
  \newsubcap{Pair of pants decomposition}\label{fulltree}
\end{subfigure}
    \end{figure}

 See Fig. \ref{tree} for an example of a core tree.  The core tree serves as a scaffolding to construct the desired pairs of pants decomposition on a surface $S(T)$ homeomorphic to $S$. We detail its contruction in what follows. This is essentially section $2.3$ of \cite{BavWal}.

\begin{itemize}
    \item \textbf{Replacing vertices}. Let $v \in V(T(S))$ be unmarked. If $v$ has degree $3$ then replace $v$ with a pairs of pants. If $v$ is a leaf then just remove it. 
    
    If $v$ is marked then replace it with a torus with $deg(v)$ boundary components. Each torus has its own pairs of pants decomposition as in the Figure \ref{tori}.

\begin{figure}[!htb]
\centering
       \minipage{0.7\textwidth}
   \includegraphics[width=\linewidth]{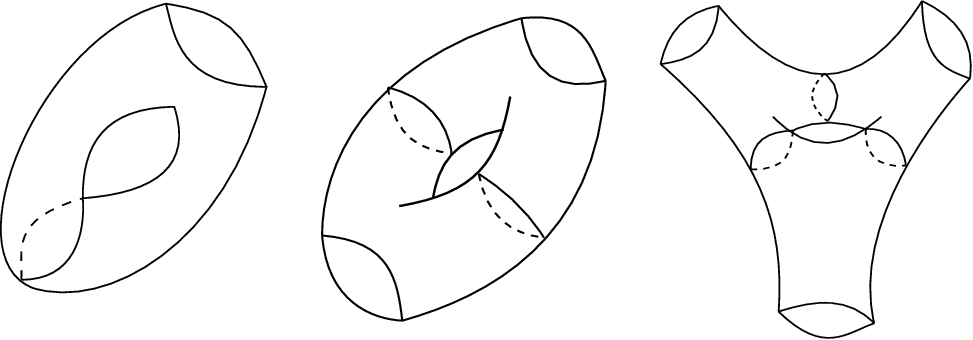}
  \caption{Tori with boundaries and with a pants decomposition. From left to right, we have the torus with one, two or three boundary components. }\label{tori}
   \endminipage
    \end{figure}
    
\item \textbf{Glueings}. 
 We glue pairs of boundary components the tori or pairs of pants if there was an edge connecting the replaced vertices by a zero twist. 

\item \textbf{Removing boundaries} Once glued we have a surface $S'$. Since $S$ has no boundary replace the boundary components of $S'$ by punctures. Now this new surface is $S(T)$ which we denote also as $S$ since it is homeomorphic to $S$.

\end{itemize}

In this pants decomposition of $S$, we have pairs of pants which can have $0,1$ or $2$ punctures. We refer to them as pairs of pants of type $0$,$1$ and $2$, respectively. See Fig. \ref{stepscoretree} for an image of the steps described above in a toy example. See algo Fig. \ref{fulltree} for an example of genus $0$ and Figs. \ref{treegenus} and \ref{fulltreegenus} for positive genus.

\begin{figure}[!htb]
\centering
       \minipage{0.9\textwidth}
   \includegraphics[width=\linewidth]{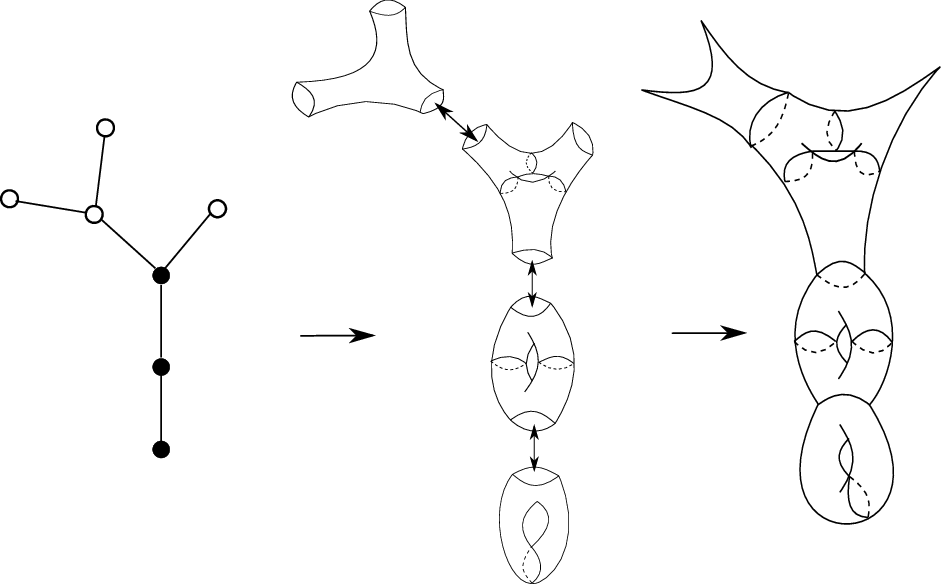}
  \caption{From a tree to a surface.}\label{stepscoretree}
   \endminipage
    \end{figure}

\begin{remark}
Every surface $S$ of infinite type is accompanied by a non-unique core tree, since there is an element of choice.
\end{remark}

\subsubsection{Preferred core tree}

For technical reasons that will be discussed later, it is convenient to work with a particular class of core trees. These are core trees having at most one \emph{exterior tree}. An \textbf{exterior tree} is a sub-tree comprising three unmarked vertices and two leaves. The exterior trees corresponds to pairs of pants of type $2$. As the following Lemma shows, for the surfaces that interest us, we can always find such a core tree. 

\begin{Lemma}
\label{exteriortree}
Let $S$ be of infinite type with $g=0$ and $End(S)$ having infinitely many isolated points. Then there is a core tree, $T(S)$ with at most one exterior tree. 
\end{Lemma}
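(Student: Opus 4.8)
The plan is to work entirely on the level of end spaces, using the dictionary between genus-$0$ core trees and compact, totally disconnected spaces. Since $g=0$, the surface carries no marked vertices, so a core tree for $S$ is simply a rooted trivalent tree: every non-leaf vertex has degree $3$ (two children and, unless it is the root, one parent) and every leaf has degree $1$. Under the correspondence of Bavard--Walker, the isolated points of $E := Ends(S)$ correspond to the leaves, while the accumulation points correspond to genuine infinite rays. In this language an \emph{exterior tree} is exactly a non-leaf vertex both of whose children are leaves; thus the lemma reduces to the purely combinatorial assertion that one can realize $E$ by a trivalent tree in which at most one vertex has two leaf-children.

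First I would record two standard facts. Because $E$ is an end space it is compact and metrizable, and because $S$ has infinitely many isolated ends it is infinite; hence its derived set $E'$ of accumulation points is nonempty. The tree will then be built from a nested sequence of finite clopen partitions $\{E\} = \mathcal{P}_0 \prec \mathcal{P}_1 \prec \cdots$ of mesh tending to $0$, refined one block at a time, with blocks as vertices and refinement as the child relation; a block that is a singleton isolated end is declared a leaf, the root block $E$ is split into three blocks (to give it degree $3$), and every other split is into two blocks (giving degree $3$ together with the parent edge). The homeomorphism $Ends(T) \cong E$ is then the usual identification of the boundary of such a partition tree with the inverse limit $E$, which I would verify by checking that isolated ends become isolated leaves and accumulation points become rays.

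The heart of the argument is to refine so as to create at most one double-leaf vertex, and for this I would maintain the invariant that every block is either a singleton or contains an accumulation point. The key observation making this possible is that deleting an isolated point from a set does not destroy its accumulation points: whenever a block $U$ contains an accumulation point I peel off one isolated end at a time, splitting $U = \{e\} \sqcup (U \setminus \{e\})$, so that the remainder still contains an accumulation point and the vertex $U$ acquires exactly one leaf-child (a type-$1$ pair of pants); and whenever $U$ contains two clopen-separated accumulation points I split it into two infinite blocks (a type-$0$ pair of pants). Under this invariant no non-leaf vertex below the root ever has two leaf-children, since such a vertex would require its block to be a two-point set of isolated ends, which the invariant forbids.

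The remaining, and genuinely delicate, point is the root, and it is here that the single exterior tree is incurred. I would split into cases according to $E'$. If $E$ admits a clopen partition into pieces each carrying an accumulation point (which holds whenever $E'$ is not a single point), the root can be split with every child infinite, or with exactly one singleton leaf-child, giving \emph{no} exterior tree. The one unavoidable case is $E' = \{p\}$, i.e. $E$ a convergent sequence: then any clopen partition places $p$ in a single infinite block and leaves only isolated ends elsewhere, so the root must have two leaf-children, producing exactly one exterior tree, after which the invariant yields no further ones below the infinite block. I expect this root analysis to be the main obstacle, both because it is where ``at most one'' is sharp and because one must simultaneously guarantee mesh $\to 0$ (so that every isolated end is eventually peeled off and the two end spaces genuinely agree) while confining the double-leaf vertex to the single root split.
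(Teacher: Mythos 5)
Your proposal is correct in substance but takes a genuinely different route from the paper's proof. The paper starts from an \emph{arbitrary} core tree of $S$ (existence black-boxed via Bavard--Walker), enumerates its exterior trees $\{t_i\}$, and performs surgery: for each $t_i$ it walks the geodesic from $t_i$ to the root, finds the maximal finite subtree containing $t_i$ as its only exterior tree, replaces that whole subtree by a single leaf, checks that the surgery creates no new exterior tree except possibly one at the root, and concludes $Ends(T')\cong Ends(T)$ (hence $S(T')\cong S(T)$) because each surgery trades finitely many isolated ends for one and infinitely many isolated ends remain. You instead build the tree ab initio from $E = Ends(S)$ as a partition tree of nested clopen decompositions, with the invariant that every block is a singleton or meets the derived set $E'$; your root dichotomy ($|E'|\geq 2$ versus $E'=\{p\}$) makes transparent exactly when the single exterior tree is unavoidable --- the flute-type case, where any degree-$3$ clopen split of $E$ forces two singleton children --- a sharpness the paper's pruning argument leaves implicit. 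What each buys: the paper's surgery is shorter because it reuses an existing core tree, but its concluding homeomorphism claim is the delicate point there, since countably many prunings are performed and one must see that the limiting tree still has the right end space; your construction avoids any limiting process over surgeries and in effect reproves the genus-$0$ case of Bavard--Walker's existence lemma with the extra property built in, at the cost of the bookkeeping you flag. That flagged gap does close: any clopen subset of a block that misses $E'$ is compact and discrete, hence finite, so any diameter-shrinking clopen refinement can be simulated by finitely many single-point peelings followed by invariant-respecting binary splits, which makes mesh $\to 0$ compatible with your invariant; and in the $E'=\{p\}$ case, peeling the isolated ends of the infinite root block in order of decreasing distance to $p$ drives diameters to zero along the resulting spine, so $Ends(T)\cong E$ as required.
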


\begin{proof}

Let $T(S)$ be a core tree of $S$. Since $T(S)$ has a countable number of vertices there is at most a countable number of exterior trees. Then there is a enumeration of them, denoted by $\{t_i\}_{i \in \mathbb{N}}$. We will remove them so that after the removal the new core tree $T'(S)$ has the property that $S(T')$ is homeomorphic to $S$ with at most one exterior tree. Let $p_i$ denote the geodesic path from the vertex of degree 2 of $t_i$ to the root. For geodesic path we mean a path  (as in graph theory) without repeating edges. Here $p_i(j)$ denotes the vertices along the path with $p_i(0)$ the initial vertex and $p_i(m_i)$ the root, $j \in \mathbb{N}$. If $p_i(0)$ is the root, we do not remove the exterior tree. Denote by $T(p_i(j))$ the connected component of $T(S) \setminus p_i(j)$ which contains the geodesic segment $[p_i(0),p_i(j-1)]$, for $0 < j \leq m_i$. Since the geodesic path $p_i$ is finite there is a unique maximal finite sub-tree $T(p_i(l))$ which has $t_i$ as the solely numbered exterior tree. Replace $T(p_i(l))$ with one single leaf, as in Fig. \ref{imagepreftree}. Note that $p_i(l)$ does not change its degree. If $p_i(l) \neq p_i(m_i)$ then $p_i(l)$ is not the vertex of a new exterior tree otherwise $T(p_i(l))$ would not have been maximal. The only possibility after this procedure is that a new exterior tree occurs on the root. Finally, since for every $t_i$ we replaced finitely many leaves with one single leaf and $Ends(S)$ has infinitely many isolated points, we have that $Ends(T')$ is homeomorphic to $Ends(T)$. Hence, $S(T')$ is homeomorphic to $S(T)$.

\end{proof}

\begin{figure}[!htb]
\centering
       \minipage{0.9\textwidth}
   \includegraphics[width=\linewidth]{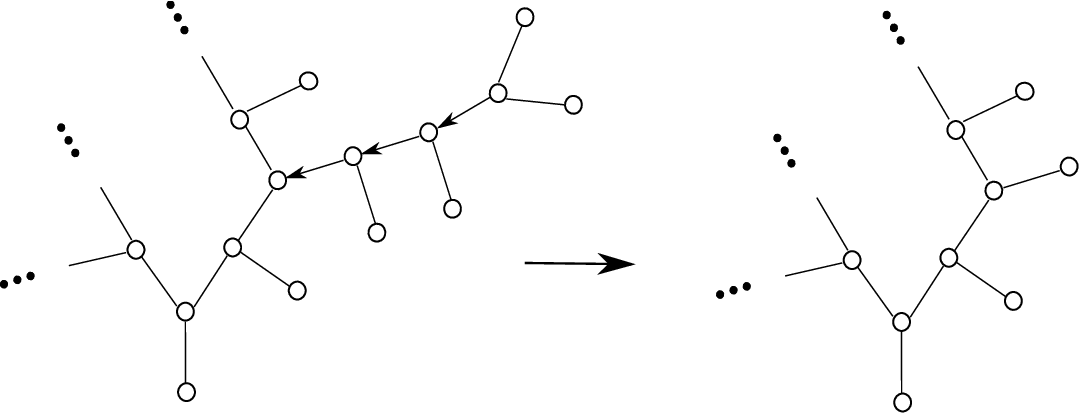}
  \caption{Removing a maximal sub-tree. The path made from directed edges represent a maximal sub-tree.}\label{imagepreftree}
   \endminipage
    \end{figure}

\subsection{Hyperbolic structures of the first kind on $S$}\label{sechypfirstkind}

By prescribing lengths to the closed curves of a pants decomposition (for example given by a core tree) of $S$ we obtain a hyperbolic structure on it, say $R'$, which it may not be complete. This is because each pair of pants of type $0,1$ and $2$ becomes a hyperbolic pairs of pants with cusps, and there is the possibility that some geodesic reach some end at finite time (see \cite{Matsuzaki}). Each hyperbolic pair of pants with $0,1$ or $2$ cusps will be called hyperbolic pairs of pants of type $0,1$ and $2$, respectively. Following \cite{completehypmanifolds} we call a connected component of the geodesic boundary of a hyperbolic pairs of pants a \emph{cuff}.

According to \cite{completehypmanifolds} a geodesic $\gamma$ of $R$ exits an end $e$ if it converges to it. An end $e$ is visible if there exists an open subset $V$ of the unit tangent bundle on $R$ such that for any $v \in V$, the geodesic starting from $v$ exits $e$. A non-visible end is an end which is not visible. A sequence of pair of pants exits an end $e$ if it converges to $e$. An infinite type end $e$ is an end which is not isolated. With the previous terminology we have.

\begin{proposition}(Consequence of Proposition 3.7 (ii) and (iii) of \cite{completehypmanifolds})

Let $R$ be a complete hyperbolic surface with a hyperbolic pairs of pants decomposition, then
 \begin{enumerate}
     \item an infinite type end $e$ of $R$ is non-visible if and only if for any geodesic ray $\gamma$ that exits $e$ the sequence of pants in the decomposition of $R$ that $\gamma$ intersects also exit $e$,
     
     \item $R$ is of the first kind if and only if each end of $R$ is non-visible.
 \end{enumerate}

\label{firstkindprop}
\end{proposition}

\begin{theorem} (Theorem 5.1 of \cite{completehypmanifolds})
Let $R'$ be a (not necessarily complete) hyperbolic surface with a pants decomposition. Then there exists a choice of twists along the cuffs of the pants so that the induced hyperbolic surface $R$, after possibly adding funnels, is a geodesically complete hyperbolic surface.
\label{completeness}
\end{theorem}

\begin{proposition}
Any connected, orientable infinite type surface without boundary $S$ admits a complete hyperbolic structure of the first kind. 
\label{everyinffirstkind}
\end{proposition}

\begin{proof}
Let $S$ have a pants decomposition given by a core tree. Assign any lengths to each of the closed curves of the pants decomposition and obtain a (possibly incomplete) hyperbolic structure $R'$ on $S$. Because of Theorem \ref{completeness} there is choice of twists on the cuffs of the pants decomposition of $R'$ that induce a complete hyperbolic structure $R$ on $S$. Since the pants decomposition is given by a core tree Proposition \ref{firstkindprop} gives that any infinite type end of $R$ is non-visible. A cusp is an end which is non-visible. Therefore $R$ is of the first-kind. 
\end{proof}

\begin{remark}
The previous result is implicit in the work of Basmajian and Šarić \cite{completehypmanifolds}, but it is not stated. It is not necessary the pants decomposition given by a core tree for the Proposition \ref{everyinffirstkind} to be true. We just think that with a core tree it is easy to see a non-visible end. 
\end{remark}

As explained in the proof of Proposition 1 of \cite{Matsuzaki} it is necessary for $S$ to have a hyperbolic structure of the first kind for the possibility of $Mod(R)$ to be countable. The above construction is still not enough to show $Mod(R)$ is countable. The obstruction are the lengths of the cuffs, again because of  Proposition 1 of \cite{Matsuzaki}. We denote by $\mathcal{P}(R)$ the hyperbolic pants decomposition on $R$ induced by the corresponding core tree of $S$. 

\section{Proof of the main result.}

The proof begins in this section, which is divided in two parts. In the first part we construct a complete and of the first kind Riemann surface structure $R$ on $S$. Here the pants decomposition given by a core tree is convenient since we can assign by \emph{levels} the length of the closed curves of the decomposition. In the second part we show countability of the group $Mod(R)$.

\subsection{Constructing the Riemann surface structure}

\textbf{Case 1. $End(S)$ does not have a finite number of isolated points, and genus of $S$ is $0$.}

        By Lemma \ref{exteriortree} we choose $T(S)$ having at most one exterior tree. We will only label edges of $T(S)$ that are not incident to a leaf. Choose an edge $E_1$ of the root vertex that does not meet a leaf and label it $1!$. We assign the number $3!$ to the other incident edges of the endpoints $E_1$ (as long as they do not meet a leaf). For the edges that got a number we repeat the process as with $E_1$. Their remaining unlabelled neighboring edges will have the number $5!$. We proceed by induction so that the edges are assigned the factorials of the odd numbers. See Fig \ref{redtreegenus0} for an example.

\begin{figure}[!htb]
\hspace{0.05\textwidth}
      \begin{subfigure}{6cm}
   \includegraphics[width=6cm]{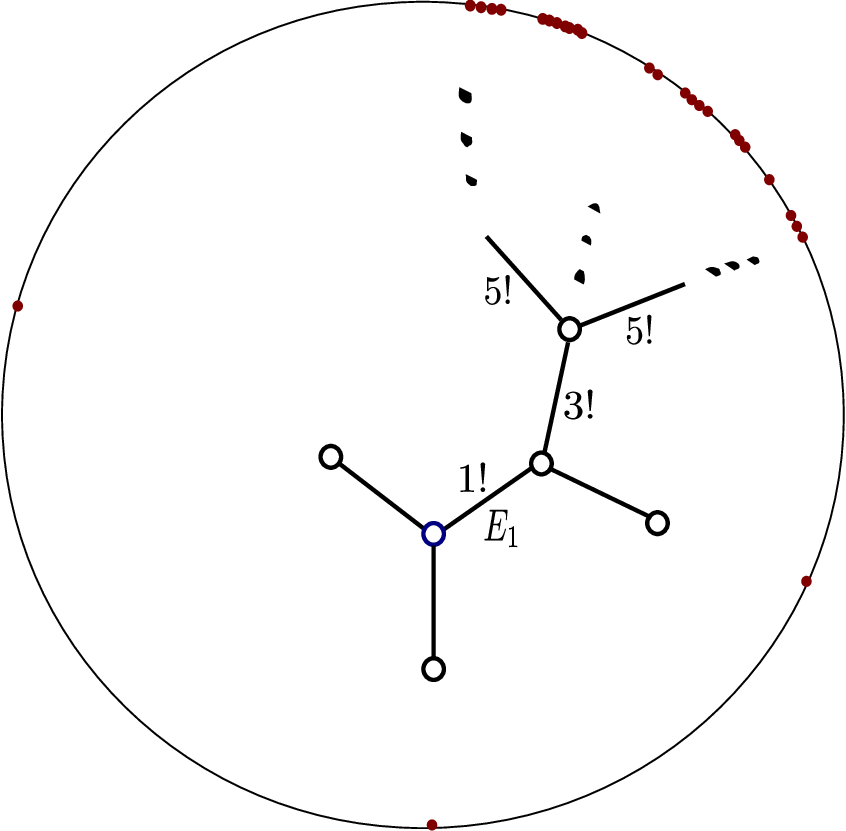}
  \newsubcap{Core tree with at most one exterior tree. The root is in blue. }\label{redtreegenus0}
   \end{subfigure}
   \hspace{0.05\textwidth}
\begin{subfigure}{6cm}
  \includegraphics[width=6cm]{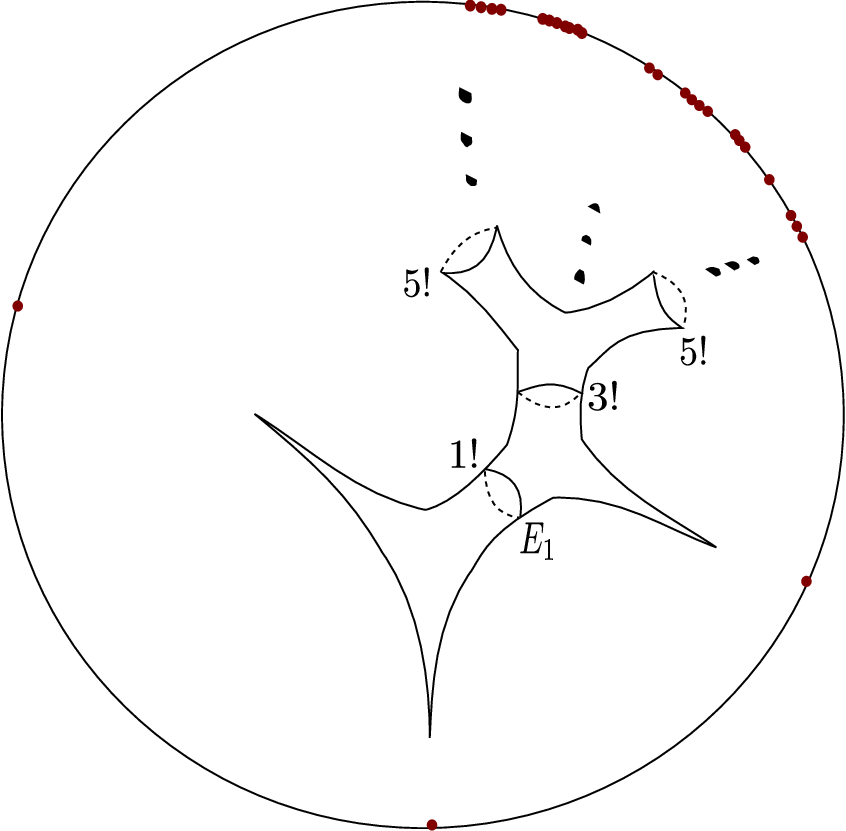}
  \newsubcap{Surface with at most one hyperbolic pairs of pants of type 2.}\label{redfulltree}
\end{subfigure}
    \end{figure}
    
    We obtain from $T(S)$ a pants decomposition on $S$ as in the preceding section. The cuffs of the pairs of pants decomposition will have the hyperbolic length equal to the number assigned to the corresponding edge in $T(S)$. The edges that do not meet a leaf always correspond to a cuff. Hence, we have a Riemann surface structure $R'$ exhausted by surfaces of finite topological type $\{R_n\}_{n \in \mathbb{N}}$. We describe the exhaustion in what follows. The finite subsurface $R_1$ is obtained by gluing two pairs of pants along $E_1$; each of its boundary components has length $3!$. $R_2$ is the union of $R_1$ with the pairs of pants glued along the boundary components of $R_1$. We proceed by induction, and therefore we have sub-surfaces $R_n$ with geodesic frontier whom all components have the same length of $(2n+1)!$. See Fig \ref{redfulltree}.
        
     By the proof of Proposition \ref{everyinffirstkind} we obtain a complete hyperbolic surface of the first kind $R$ from $R'$.

\textbf{Case 2. Genus of $S$ is positive or has a finite number of isolated ends }

        Note that we can always choose a core tree $T(S)$ with a finite sub-tree $T^*$ with all of its  vertices being marked and also having all possibly finite leaves. We can choose the tree $T(S)$ such that the root vertex is part of $T^{*}$ and deleting it will separate $T^{*}$ from the rest of $T(S)$. We can also choose $T(S)$ such that $T(S) \setminus T^{*}$ has at most one exterior tree (in the case of infinitely many isolated points). So we choose $T(S)$ in this way.
       
       Assign whatever positive numbers to the edges of $T^{*}$. Then choose an edge $E_1$ of the root vertex that is not part of $T^{*}$ and then assign numbers to the edges as in the previous case. Obtain from $T(S)$ the pants decomposition as in section $2$ and make the twists to each cuff such that the resulting Riemann surface $R$ is complete and of the first kind. See Figs. \ref{treegenus} and \ref{fulltreegenus}.

\begin{figure}[!htb]
\hspace{0.05\textwidth}
    \begin{subfigure}{6cm}
   \includegraphics[width=6cm]{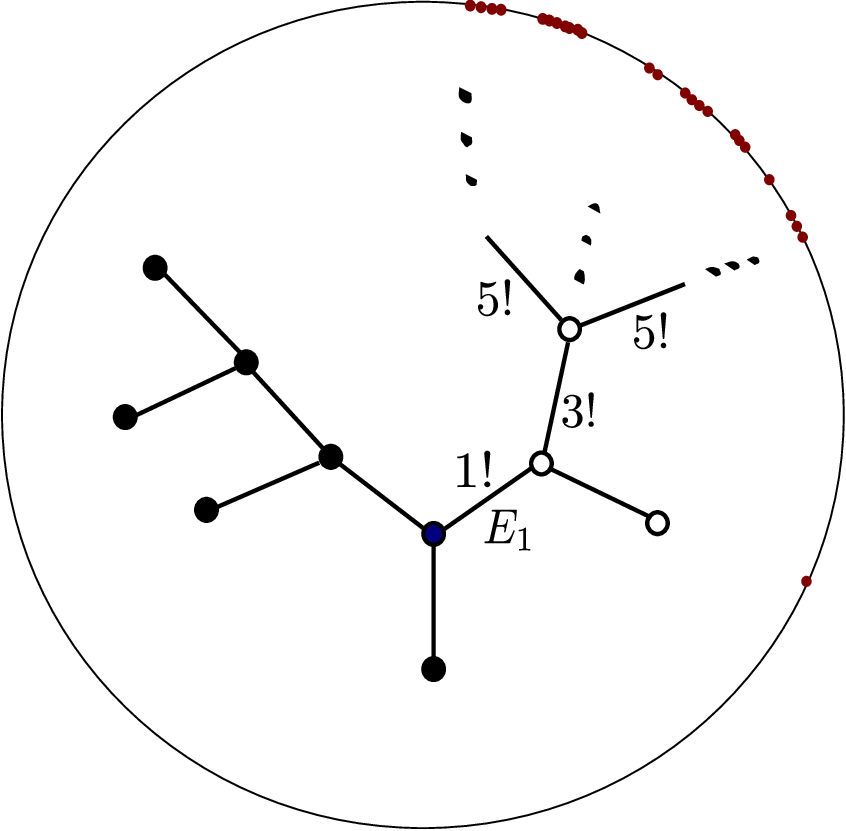}
  \newsubcap{Core tree representing finite genus. The root is in blue. }\label{treegenus}
 \end{subfigure}
 \hspace{0.05\textwidth}
\begin{subfigure}{6cm}
  \includegraphics[width=6cm]{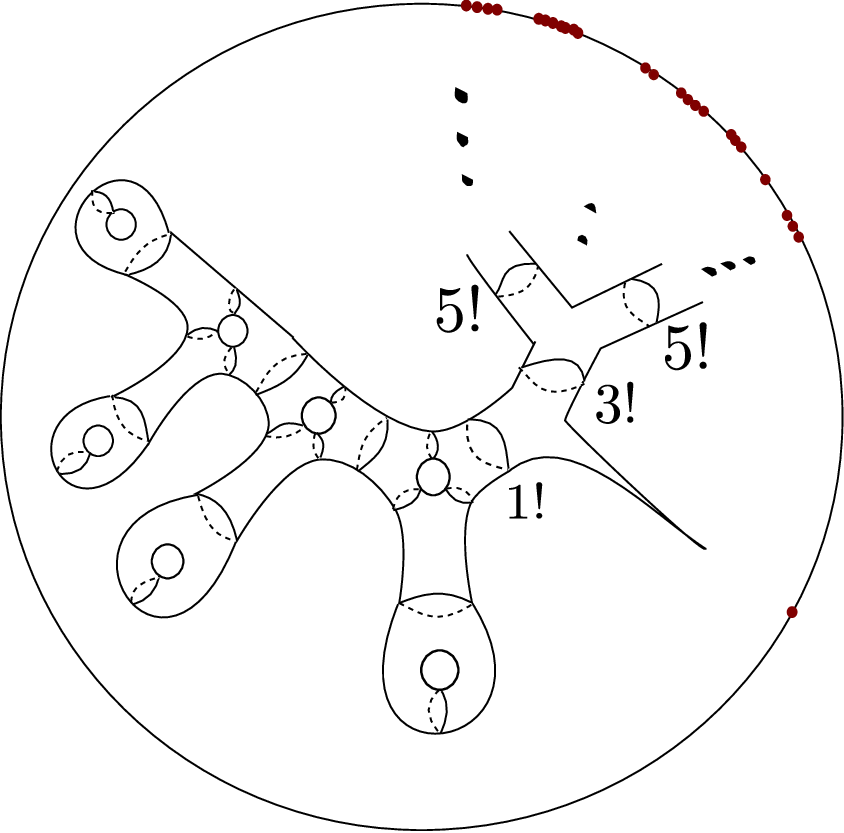}
  \newsubcap{Surface with positive genus.}\label{fulltreegenus}
\end{subfigure}
    \end{figure}

       The exhaustion of the $R$
      will be given by: $R_1$ is the subsurface with all the genus of $S$ and possibly the finite isolated points of $Ends(S)$. The geodesic boundary of $R_1$ is the cuff chosen corresponding to $E_1$. The subsurface $R_2$ is $R_1$ union the pair of pants of type $1$ or pair of pants of type $0$ that $E_1$ bounds. The subsurface $R_3$ is the union of $R_2$ and the perifec pair or pair of pants the geodesic boundaries of $R_2$ bounds. We proceed by induction.

\subsection{Countability}

As mentioned in the introduction, the idea for proving countability is to define injective maps $\Psi_n: Mod(R)_n \rightarrow Mod(R_n)$ for all $n \geq N$, for some $N$. Recall that $Mod(R)_n$ is the set of quasiconformal equivalence classes with a $n$-quasiconformal representative; $Mod(R_n)$ is the group of quasiconformal maps of $R_n$ up to homotopy. The injectivity of $\Psi_n$ implies that $Mod(R)$ is countable since $Mod(R) = \bigcup_{n \in \mathbb{N}} Mod(R)_n$ and $Mod(R)_m \subset Mod(R)_n, n>m$. This $N$ is given by Lemma \ref{nonwanderingsurface}, which is an extension of Lemma 4 of \cite{Matsuzaki}. In order to prove it we need the following two lemmas. 

\begin{Lemma}(Wolpert's Lemma, \cite{Wolpert})
\label{wolpertLemma}
Let $c$ be a simple closed geodesic on a Riemann surface $R$ with the length $l(c)$ and $g: R \rightarrow R$ a $K$-quasiconformal homeomorphism. Then the geodesic length $l(g(c))$ of the free homotopy class of $g(c)$ on $R$ satisfies 

$$ \dfrac{1}{K}l(c) \leq l(g(c)) \leq K \cdot  l(c).$$
\end{Lemma}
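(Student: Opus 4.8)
The plan is to reduce the statement to the classical distortion of the conformal modulus of an annulus under a quasiconformal map, by passing to the annular cover associated to the free homotopy class of $c$. Write $R = \mathbb{H}/\Gamma$ with $\Gamma \subset \mathrm{PSL}(2,\mathbb{R})$ a Fuchsian group, and let $\gamma \in \Gamma$ be a primitive hyperbolic element whose axis projects to $c$; its translation length equals $l(c)$. Forming the intermediate (cyclic) cover $A_c := \mathbb{H}/\langle \gamma\rangle$ produces an annulus. After conjugating so that $\gamma: z \mapsto e^{l(c)}z$ and applying the change of coordinate $w = \log z$, one identifies $A_c$ with the quotient of the strip $\{0 < \mathrm{Im}\, w < \pi\}$ by $w \mapsto w + l(c)$, so that its conformal modulus is
$$\mathrm{Mod}(A_c) = \frac{\pi}{l(c)}.$$
The same construction, applied to the cyclic subgroup representing the free homotopy class of $g(c)$, yields an annulus $A_{g(c)}$ with $\mathrm{Mod}(A_{g(c)}) = \pi/l(g(c))$; since this modulus depends only on the conjugacy class of the associated deck element, it legitimately records the geodesic length $l(g(c))$ even though $g(c)$ itself need not be geodesic.

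Next I would lift $g$ to these covers. Because $g$ sends the class $[c]$ to $[g(c)]$, the induced isomorphism $g_*$ of fundamental groups carries $\langle\gamma\rangle$ to the cyclic subgroup generated by a representative of $g(c)$, so covering-space theory provides a homeomorphism $\tilde g : A_c \to A_{g(c)}$ covering $g$. Since the covering projections are local conformal isometries, pre- and post-composition with them leaves the modulus of the Beltrami coefficient unchanged, so $\tilde g$ is again $K$-quasiconformal. I would then invoke the standard modulus inequality for a $K$-quasiconformal homeomorphism between annuli,
$$\frac{1}{K}\,\mathrm{Mod}(A_c) \leq \mathrm{Mod}(A_{g(c)}) \leq K\,\mathrm{Mod}(A_c),$$
and substitute the two values $\pi/l(c)$ and $\pi/l(g(c))$. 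Rearranging the reciprocals turns these into $\tfrac{1}{K} l(c) \leq l(g(c)) \leq K\, l(c)$, which is exactly the asserted bound.

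The computation of $\mathrm{Mod}(A_c)$ and the modulus inequality are classical, so the only genuinely delicate point is the lifting step: verifying that $g$ really does lift to a map between the two cyclic covers (which rests on $g$ respecting the relevant conjugacy classes) and that the lift inherits the same dilatation constant $K$. I expect this, together with the bookkeeping that $A_{g(c)}$ is the cover of the geodesic representative of $[g(c)]$ — so that its modulus encodes the geodesic length $l(g(c))$ rather than the length of some non-geodesic curve — to be the main obstacle, while everything else follows from standard facts about moduli of annuli and their behavior under quasiconformal maps.
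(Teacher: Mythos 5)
Your argument is correct, and it is the standard proof of Wolpert's inequality via the annular cover: compute $\mathrm{Mod}(A_c)=\pi/l(c)$, lift $g$ to a $K$-quasiconformal map $A_c \to A_{g(c)}$, and apply quasi-invariance of the modulus. The paper itself offers no proof of this lemma — it simply cites Wolpert and points to Lemma 8.1 of Basmajian--Šarić — so there is nothing to diverge from; your write-up supplies essentially the argument found in those references. One point you could make explicit: the class of $g(c)$ must be hyperbolic (not parabolic) for $A_{g(c)}$ to have finite modulus $\pi/l(g(c))$, but this is automatic from your own inequality, since a parabolic class would give $\mathrm{Mod}(A_{g(c)})=\infty$, contradicting $\mathrm{Mod}(A_{g(c)}) \leq K\,\mathrm{Mod}(A_c) < \infty$.
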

\vspace{0.5cm}

This is a consequence of Lemma 8.1 of \cite{completehypmanifolds}. 

\begin{Lemma}
\label{boundpenthagon}
Let $\Sigma$ be an ideal right-angled pentagon as in Fig. \ref{idealpenthagon}. Let $l(d)$ the length of the geodesic orthogonal segment $d$. Then for sufficiently large $l(a)>1$, and $l(c) > l(a) $, then 

$$l(d) \geq  l(c)-l(a). $$ 
\end{Lemma}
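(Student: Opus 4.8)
The plan is to reduce the inequality to the single hyperbolic-trigonometric identity relating the three relevant sides of $\Sigma$, and then to estimate that identity for large lengths. First I would normalize the picture: place $\Sigma$ in the upper half-plane $\mathbb{H}$ with the ideal vertex at $\infty$, so that the two sides meeting at the cusp become vertical half-lines while $a$, $c$ and the orthogonal segment $d$ become semicircular arcs meeting their neighbours at right angles. Reading off the four finite right angles in the configuration of Fig.~\ref{idealpenthagon} gives the right-angled pentagon relation
\[
\cosh l(c)=\sinh l(a)\,\sinh l(d),
\]
in which $a$ and $d$ are the two sides adjacent to the right angle opposite $c$; the bounded contribution of the cusp is of lower order and will be absorbed by the largeness hypothesis. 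I would record this identity (obtainable also by degenerating the right-angled hexagon law $\cosh C=\sinh A\sinh B\cosh\gamma-\cosh A\cosh B$ as one cuff tends to a cusp, or by a direct cross-ratio computation of the feet of $d$) as the sole geometric input.

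Next I would solve the identity for the quantity of interest,
\[
\sinh l(d)=\frac{\cosh l(c)}{\sinh l(a)},
\]
and bound the two factors by the elementary one-sided estimates $\cosh t>\tfrac12 e^{t}$ and $\sinh t<\tfrac12 e^{t}$, valid for every $t>0$. This gives
\[
\sinh l(d)>\frac{\tfrac12 e^{\,l(c)}}{\tfrac12 e^{\,l(a)}}=e^{\,l(c)-l(a)}.
\]
Since $\sinh t<e^{t}$ for all $t$, in particular $\sinh\!\big(l(c)-l(a)\big)<e^{\,l(c)-l(a)}<\sinh l(d)$, and the monotonicity of $\sinh$ then yields $l(d)>l(c)-l(a)$, which is the assertion. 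The hypotheses $l(c)>l(a)$ and $l(a)>1$ make the displayed difference positive and keep all lengths in the range where the one-sided exponential bounds lose nothing essential.

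The step I expect to be the crux is not the estimate but the correct extraction of the identity from Fig.~\ref{idealpenthagon}: one must be sure which arc is the side $c$ opposite the distinguished right angle and which two are its legs $a,d$, because a different assignment produces instead a $\coth$--$\coth$ relation among the other triple of segments, and such a relation gives an \emph{upper} bound on $d$ rather than the lower bound sought here. Thus I would spend the care in verifying the labelling and in checking that the cusp's lower-order contribution is genuinely dominated once $l(a)>1$ is large; after that, the chain of exponential inequalities above closes the proof in a couple of lines.
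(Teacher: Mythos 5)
The paper itself contains no proof of this lemma --- it is stated bare, and the adjacent sentence ``This is Lemma 8.1 of \cite{completehypmanifolds}'' attaches to Wolpert's Lemma above it --- so your attempt can only be compared against the standard argument the statement requires. Your overall strategy (extract one hyperbolic-trigonometric relation, then close with one-sided exponential bounds and monotonicity of $\sinh$) is the right one, and your final chain $\sinh l(d)\ge \cosh l(c)/\sinh l(a) > e^{\,l(c)-l(a)} > \sinh\bigl(l(c)-l(a)\bigr)$ is airtight. But the geometric input you propose is wrong as stated, and you half-sensed this: the displayed identity $\cosh l(c)=\sinh l(a)\sinh l(d)$ is the relation for a \emph{compact} right-angled pentagon in which $a$ and $d$ meet at the vertex opposite $c$, and no such configuration exists in a pentagon with an ideal vertex. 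Worse, $d$ cannot be a \emph{side} of the ideal pentagon at all: the three finite sides $a,b,c$ (with $b$ the middle one) satisfy the degenerate right-angled-hexagon relation
\[
\sinh l(a)\,\sinh l(c)\,\cosh l(b)\;=\;1+\cosh l(a)\,\cosh l(c),
\]
obtained by letting one cuff of a pair of pants shrink to a cusp, and this forces the middle side to be \emph{short} when $l(a),l(c)$ are large (it is the seam between two long cuffs); checking the other assignments of $\{a,c,d\}$ to the three finite sides likewise contradicts $l(d)\ge l(c)-l(a)$. So your worry about ``a $\coth$--$\coth$ relation giving an upper bound instead'' is real, but the resolution is not a relabelling of sides, and your claim that ``the cusp's contribution is of lower order and absorbed by largeness'' is not a proof step --- the cusp changes the identity qualitatively, not by an error term.

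The fix, which rescues your estimate verbatim: in Fig.~\ref{idealpenthagon} the segment $d$ is an orthogonal segment \emph{interior} to the pentagon, namely the common perpendicular from the side $a$ to the non-adjacent infinite side (this is exactly half of the orthogeodesic from a cuff to itself in the cusped pair of pants, which is how the lemma is used in Lemma~\ref{nonwanderingsurface}). Dropping this perpendicular cuts off a \emph{compact} right-angled pentagon whose consecutive sides are a subsegment $p$ of $a$, the short seam, the full side $c$, a piece of the infinite side, and $d$; the compact pentagon relation applied there gives
\[
\cosh l(c)\;=\;\sinh p\,\sinh l(d)\;\le\;\sinh l(a)\,\sinh l(d),
\]
since $p\le l(a)$. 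This is precisely the inequality form of your identity --- the cusp in fact works in your favor --- and from here your exponential estimates finish the proof exactly as you wrote them (note they need only $l(c)>l(a)>0$, so the hypothesis ``sufficiently large $l(a)>1$'' is a safety margin rather than a necessity). In short: correct skeleton and correct endgame, but the single geometric identity you designated as ``the sole geometric input'' is false as an equality in the ideal pentagon, and the argument only closes once $d$ is identified as the interior perpendicular and the identity is replaced by the cut-off-pentagon inequality above.
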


\begin{figure}[!htb]
\centering
       \minipage{0.4\textwidth}
   \includegraphics[width=\linewidth]{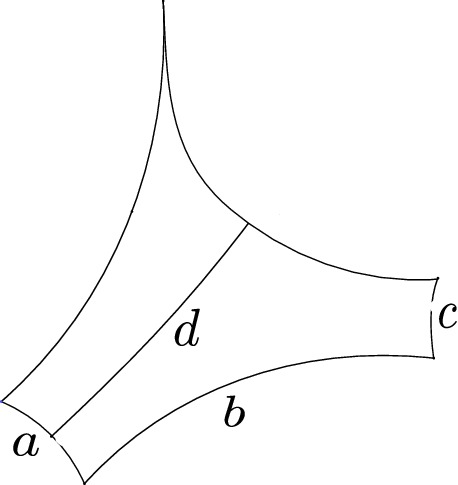}
  \caption{Ideal Pentagon}\label{idealpenthagon}
   \endminipage
    \end{figure}

  \begin{Lemma}
\label{nonwanderingsurface}
Let $S$ be of infinite type and finite genus and $R$ be a Riemann surface structure on $S$ as constructed in Section 3.1. There exists $N \in \mathbb{N}$ such that for all $g:R \rightarrow R $ $K$-quasiconformal homeomorphism, with $K \geq N$, $g(R_n)$ is freely homotopic to $R_n$ for every $n \geq K$. The $R_n$ are the exhaustion subsurfaces mentioned in Section 3.1. 
\end{Lemma}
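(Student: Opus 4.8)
The plan is to fix a $K$-quasiconformal homeomorphism $g$ with $K\ge N$ (the threshold $N$ being pinned down only at the very end) together with a level $n\ge K$, and to analyse the geodesic representatives of the $g$-images of the components of $\partial R_n$. Recall from the construction in Section 3.1 that every component of $\partial R_n$ is a simple closed geodesic of length exactly $(2n+1)!$, and that, apart from the finitely many cuffs of bounded length sitting in the genus/finite-ends core, the pants curves of $R$ have lengths among the odd factorials, a level-$k$ cuff having length $(2k+1)!$. The whole statement reduces to the following claim, which I would establish for $N$ large and all $n\ge K\ge N$: for every component $c$ of $\partial R_n$ the geodesic representative $\gamma$ of $g(c)$ is again a level-$n$ cuff.

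First I would record the length window. By Wolpert's Lemma (Lemma \ref{wolpertLemma}), $\gamma$ is a simple closed geodesic with
\[
\frac{(2n+1)!}{K}\le l(\gamma)\le K\,(2n+1)!,
\]
and since $n\ge K$ this window is contained in $[(2n+1)!/n,\, n\,(2n+1)!]$. The factorial growth is tuned precisely so that this window isolates level $n$ among the pants curves: the ratio to the next shorter scale is $(2n+1)!/(2n-1)!=2n(2n+1)>n\ge K$, so no cuff of level $\le n-1$ lands in the window, while the ratio to the next longer scale is $(2n+3)!/(2n+1)!=(2n+2)(2n+3)>n\ge K$, so no cuff of level $\ge n+1$ does either; and for $N$ large the finitely many exceptional cuffs of the core have length far below $(2n+1)!/n$. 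Hence, if $\gamma$ is freely homotopic to some pants curve at all, that curve must be a level-$n$ cuff.

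The substantive point is to exclude the transverse case, namely that $\gamma$ is not freely homotopic to any pants curve. Here I would invoke Lemma \ref{boundpenthagon}. A transverse simple closed geodesic crosses cuffs essentially and, in the part of $S$ it reaches, must cross a pair of pants carrying a cusp (a type-$1$ or type-$2$ pants); cutting such a pants along its seam produces the ideal right-angled pentagon of Lemma \ref{boundpenthagon}, whose two finite cuffs are consecutive-level cuffs, say of lengths $l(a)=(2k-1)!$ and $l(c)=(2k+1)!$. The orthogeodesic segment $d$ that $\gamma$ is forced to follow then has length at least $l(c)-l(a)=(2k+1)!-(2k-1)!$. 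The task is to leverage this against the upper bound $l(\gamma)\le K\,(2n+1)!\le n\,(2n+1)!$: a non-peripheral simple closed geodesic must make crossings deep enough that the accumulated lower bound overruns $n\,(2n+1)!$, which is impossible. Taking $N$ large ensures both that every cuff length entering these estimates exceeds the ``sufficiently large $l(a)>1$'' hypothesis of Lemma \ref{boundpenthagon} and that the factorial-ratio inequalities of the previous paragraph hold simultaneously at all levels $n\ge K\ge N$, so a single $N$ suffices.

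Finally I would upgrade the curve-by-curve claim to the surface statement. Since $g$ is an orientation-preserving homeomorphism, $g(\partial R_n)$ is a multicurve with as many pairwise disjoint, pairwise non-homotopic components as $\partial R_n$; by the claim each is freely homotopic to a level-$n$ cuff, so $g$ induces a bijection between $\partial R_n$ and the full family of level-$n$ cuffs, i.e. $g(\partial R_n)$ is freely homotopic to $\partial R_n$. Isotoping $g$ so that it fixes $\partial R_n$, it then permutes the complementary components of $\partial R_n$ while preserving topological type; as $R_n$ is the unique finite-type complementary component (every other piece is of infinite type, carrying infinitely many ends of $S$), we get $g(R_n)\simeq R_n$. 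I expect the third paragraph to be the genuine obstacle: reconciling the per-crossing cost supplied by Lemma \ref{boundpenthagon} with the multiplicative width $K$ of the Wolpert window is delicate, since a single crossing at level $n$ already contributes roughly $(2n+1)!$ and thus sits inside the window; the argument must therefore exploit, in an essential way, that a simple non-peripheral geodesic of length at most $n\,(2n+1)!$ cannot realise the deep crossings its topology would require.
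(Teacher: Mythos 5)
Your proposal sets up the right framework (Wolpert window plus the pentagon estimate of Lemma \ref{boundpenthagon}), but it stalls exactly where the paper's proof closes: you leave the transverse case as an admitted ``genuine obstacle,'' believing that a single crossing at level $n$ costs only about $(2n+1)!$ and therefore sits inside the Wolpert window $[(2n+1)!/K,\,K(2n+1)!]$, so that some accumulation-of-crossings argument would be needed. This is an off-by-one misreading of the geometry. If $g(c_n)$ essentially exits $\overbar{R_n}$, its geodesic representative contains a segment $s_n$ that enters a pair of pants \emph{outside} $\overbar{R_n}$ through its inner cuff, of length $(2m+1)!$ with $m\geq n$, and returns through that same cuff; the pentagon controlling that excursion has finite sides coming from the cuff lengths $(2m+1)!$ and $(2m+3)!$ --- the level \emph{above}, not the level below, as in your labelling $l(a)=(2k-1)!$, $l(c)=(2k+1)!$. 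Lemma \ref{boundpenthagon} then gives
\[
l(s_n)\;>\;(2m+1)!\,(2m+1)\;\geq\;(2n+1)!\,(2n+1)\;>\;n\,(2n+1)!\;\geq\;K\,l(c_n),
\]
since $(2m+3)!-(2m+1)!=(2m+1)!\bigl((2m+2)(2m+3)-1\bigr)$ dwarfs $(2m+1)(2m+1)!$. So a \emph{single} crossing already contradicts Wolpert's Lemma; no bookkeeping over deep or multiple crossings is required. The factorial spacing is tuned precisely so that the per-crossing cost involves the next factorial scale, which is what your window-isolation paragraph fails to exploit.

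Two smaller discrepancies with the paper's argument: first, your reduction of the transverse case to pants ``carrying a cusp'' is neither justified nor necessary --- the paper handles crossings of type-$0$ pants by the explicit computation in Proposition 5 of \cite{Matsuzaki}, which yields the same lower bound $l(s_n)>(2m+1)!(2m+1)$, and when only finitely many type-$1$ pants exist it simply enlarges $N$ so that they all lie inside $R_N$. Second, the positive-genus/finite-isolated-ends case needs a word: there the finitely many core cuffs have arbitrary lengths, and the paper disposes of them by noting that the genus-carrying subsurface $R_1$ is non-wandering by construction, after which the same factorial estimates apply outside it; your ``far below $(2n+1)!/n$'' remark gestures at this but does not address whether the core could absorb the image curve. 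Your final paragraph upgrading the curve-by-curve statement to $g(R_n)$ freely homotopic to $R_n$ is sound and is essentially implicit in the paper.
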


\begin{proof}

The goal will be to contradict Lemma \ref{wolpertLemma}. The closed curves we are going to use are the geodesic boundaries of the subsurfaces $R_n$, which are closed curves of $\mathcal{P}(R)$ described in Section 3.1. Lemma \ref{boundpenthagon} will give us the $N$ such that if $R_n, n\geq N,$ is not freely homotopic to $g(R_n)$, then the image of some of the boundary curves, say $c$ under $g$ is going to be large enough so that the inequality $l(g(c)) \leq K \cdot l(c) $ of Lemma \ref{wolpertLemma} cannot hold. This is where Lemma \ref{exteriortree} is crucial. If we allow infinitely many pants of type $2$, then by Theorem 7.17.1 (i) of \cite{Beardon}, the orthogonal geodesic that goes from the geodesic boundary to the geodesic boundary of the pants will go to zero. Hence, we cannot longer control de length of the image of a geodesic boundary by a quasiconformal map.

\textbf{Case 1. $S$ is of genus $0$ and $Ends(S)$ has infinitely many isolated points.}

First note that for every $n$ if $g(R_n)$ is not freely homotopic to $R_n$ there is some curve $c_n$ of the boundary component of the $R_n$ such that $g(c_n) \cap (R \setminus \overbar{R_n}) \neq \emptyset$. This is because the boundary components of $R_n$ are elements of $\mathcal{P}(R)$ described in Section 2 and the $R_n$'s are sub-surfaces with boundaries. Recall from Section 2 that we eliminated the possibility of having infinitely many pairs of pants of type $2$, in fact at most one can occur only in the root. Hence, the $\mathcal{P}(R)$ consists of, at most, one hyperbolic pair of type $2$, and infinitely many of pairs of pants of type $0$ or $1$. Suppose then we have $g(c_n) \cap (R \setminus \overbar{R_n}) \neq \emptyset$, for some $c_n$. Since we have a direction of exhaustion, that is, the exhaustion $\{R_n\}$ is a nested sequence: $R_n \subset R_{n+1}$,  and we have genus $0$, there is some geodesic $\sigma_n$ homotopic to $g(c_n)$ such that it has a segment $s_n$ which cuts a hyperbolic pair of pants of type $0$ or $1$. The segment $s_n$ cuts the pairs of pants in a boundary component and returns to it in the direction of the orthogonal geodesic on the corresponding pairs of pants that also cuts the same boundary component, see Figures \ref{segment1} and \ref{segment2}. We are going to give lower bounds to the length of this segment. 

\begin{figure}[!htb]
\hspace{0.05\textwidth}
    \begin{subfigure}{6cm}
   \includegraphics[width=5cm]{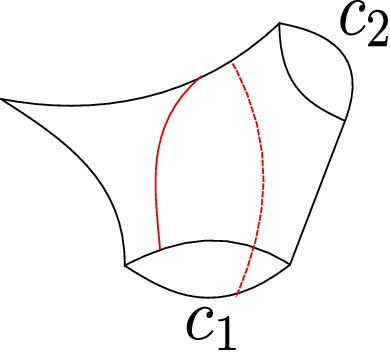}
  \newsubcap{In red is a possible trajectory of the segment $s_n$ in the case of pairs of pants of type $1$. The lengths of $c_1$ is $(2m+1)!$ and of $c_2$ is $(2m+3)!$, for some $m$. }\label{segment1}
 \end{subfigure}
 \hspace{0.05\textwidth}
\begin{subfigure}{6cm}
  \includegraphics[width=4.5cm]{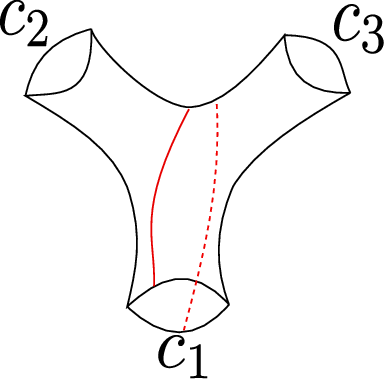}
  \newsubcap{In red is a possible trajectory of the segment $s_n$  in the case of pairs of pants of type $0$. The lengths of $c_1$ is $(2m+1)!$ and of $c_2$ and $c_3$ are $(2m+3)!$, for some $m$.}\label{segment2}
\end{subfigure}
    \end{figure}

Suppose first $\mathcal{P}(R)$ has infinitely many pairs of pants of type $1$. Then Lemma \ref{boundpenthagon} gives an $N$, such that if $s_n$ cuts a pair of pants of type $1$ with one geodesic boundary of length $(2m+1)!$ and the other of length $(2m+3)!$, with $m \geq N$ then the inequality given by Lemma \ref{boundpenthagon} implies $l(s_n) > (2m+1)!(2m+1)$. If $s_n$ would cut a pairs of pants of type $0$ on the boundary component of length $(2m+1)!$ (the other two boundary components have length $(2m+3)!$), with $m\geq N$, then the calculations for the proof of  Proposition $5$ of \cite{Matsuzaki} also implies $l(s_n) > (2m+1)!(2m+1)$. Therefore in this case we can take $N=l(a)$ of Lemma \ref{boundpenthagon} for the statement we want to prove. Hence if $g$  is a $K$-quasiconformal map with $K\geq N$ with $g(R_n)$ not freely homotopic to $R_n$, then there would be a pair of pants such that $s_n$ cuts its boundary component of length $(2m+1)!$ with $m\geq n \geq K \geq N$. The previous discussion shows together with Lemma \ref{wolpertLemma} that 
$n(2n+1)! \geq l(g(c_n))$, but $l(g(c_n)) \geq l(s_n) > (2m+1)!(2m+1)$, contradiction.  Here we have used the assignment of lengths by the factorials of odd numbers for easily contradict Wolpert's lemma. Other assignments of lengths might be possible. 

In the case where $\mathcal{P}(R)$ has $r \in \mathbb{N}$ pairs of pants of type $1$ $P_{i}^1$, $i \in {1,2,...,r}$, we set $N=min\{n: P_i \subset R_n  \forall i \in {1,2,...,r}\}+1$. The same argument in the case of pairs of pants of type $0$ gives the result. 

\textbf{Case 2. $S$ has positive genus and possibly finite isolated ends.}

We can choose $N$ as in the previous case. Note however that given our construction of this case in section 2 the subsurface with genus or finite isolated ends is a non wandering surface (a sub-surface fixed by homeomorphisms up to homotopy). Therefore all the arguments go through as in the previous case. 
  
\end{proof}

\begin{proof}[End of proof of Theorem \ref{MainResult}]
By Lemma \ref{nonwanderingsurface} the restriction map $\Psi_n: Mod(R)_{n} \rightarrow Mod(R_n)$ is well defined for $n \geq N$. Recall that $Mod(R)_n$ are the homotopy classes of $Map(S)$ that have a $n$-quasiconformal representative for the Riemann surface structure we constructed. The group $Mod(R_n)$ is the group of homotopy classes of quasiconformal homeomorphisms of $R_n$. Note that because of the decomposition given by a core tree we have that the connected components of $R_{n+1} \setminus \overbar{R_n}$ are pairs of pants. Because of Lemma \ref{nonwanderingsurface} for every $n$-quasiconformal homeomorphism $g$, with $n \geq N$ we have $g(R_{n+1} \setminus \overbar{R_n} ) = R_{n+1} \setminus \overbar{R_n} $ mapping any pairs of pants (of type $0$ or $1$) to a pair of pants of the same type. The homeomorphism $g$ also maps boundary components to boundary components of the same length. This implies that if $\Psi_n(g)=\Psi_n(f)$, for $m \geq n$ the difference between $g$ and $f$, i.e. $g \circ f^{-1}$, on $R_{m+1} \setminus \overbar{R_m}$ is a conformal homeomorphism homotopic to the identity. Hence the difference between $f$ and $g$ on $R \setminus \overbar{R_n}$ is by possibly Dehn twists along each cuff of $R \setminus \overbar{R_n}$, but this cannot happen, otherwise the Dehn twists would make either $f$ or $g$ $K$-quasiconformal with $K>n$. This follows from Matsuzaki's argument for proving Theorem 3 in  \cite{Matsuzaki}, which essentially goes as follows. The quasiconformal constant $K$ of extremal (in)finite products of Dehn twists for a complete hyperbolic structure (Theorem 1 of \cite{Dehntwistsproducts}) is bounded: $$K \geq \sqrt{\Big(\dfrac{1+(2n+1)!)}{\pi}\Big)^2 + 1}.$$ But $K \leq n$, a contradiction. Then $Mod(R)_{n}$ is countable.
\end{proof}

\begin{note}
When infinite genus is involved the above method has a main limitation, as one might consider non-separating curves. The difficulty then is to show that a $n$-quasiconformal map $g$ (as in the proof of Theorem \ref{MainResult}) has the property $g(R_{n+1} \setminus \overbar{R_n} ) = R_{n+1} \setminus \overbar{R_n} $. The map $g$ may act as a Dehn twist around the genus, hence it may not map the boundaries of $R_n$ to themselves. If we do not consider non-separating curves, now the issue is to verify if $g|_{(R_{n+1} \setminus \overbar{R_n}} $ is conformal or maybe quasiconformal but with restricted distortion. 
\end{note}

\begin{comment}
Tengo que ver si el resultado de Basmajn y Saric se extiende a una asignación de twists y longitudes que creo que sí. Y de qué manera podría extenderse. Basicamente si la extensión se puede hacer de manera que la sistole aumente de longitud y no decrezca. 
\end{comment}

\begin{comment}

For the case of type $B$, we have that by trigonometry if the bound is indeed attained the following inequality should be true 

$$ e^{-n!n} > 1 -2ye^{\dfrac{-n!n}{2}}$$

where $y=\dfrac{cosh(\dfrac{(n+1)!}{2})}{sinh(\dfrac{n!}{2}-x)}$, where $x$ is some length of the half-cuff. Note that $x < \dfrac{n!}{2}$. Then doing the calculations we arrive at the following inequality. 

$$ e^{-n!n} > 1 -2e^{x}(\dfrac{1+e^{-n!}}{1-e^{-n!+2x}}) $$

Where the right hand side is always negative.

\end{comment}

\printbibliography

\Address
\end{document}